\documentclass[a4paper, reqno]{amsart}
\newif\iffinal
\finaltrue

\pdfoutput=1 

\usepackage[utf8]{inputenc}
\usepackage[T1]{fontenc}
\usepackage{lmodern}
\usepackage[english]{babel}

\usepackage[shortlabels]{enumitem}

\usepackage{amssymb}
\usepackage{colonequals}
\usepackage{mathtools}
\usepackage{hyperref}
\usepackage{colortbl}
\usepackage{color}
\usepackage{stmaryrd}

\DeclareMathOperator{\Int}{Int}

\newcommand{\N}{\mathbb{N}}

\newcommand{\Z}{\mathbb{Z}}
\newcommand{\val}{v}

\DeclareMathOperator{\fixdiv}{d}

\DeclarePairedDelimiter{\norm}{\lVert}{\rVert}

\makeatletter
\@namedef{subjclassname@2020}{\textup{2020} Mathematics Subject Classification}
\makeatother

\makeatletter
\newtheorem*{rep@theorem}{\rep@title}
\newcommand{\newreptheorem}[2]{%
\newenvironment{rep#1}[1]{%
 \def\rep@title{#2 \ref{##1}}%
 \begin{rep@theorem}}%
 {\end{rep@theorem}}}
\makeatother

\newreptheorem{theorem}{Theorem}

\newtheorem{theorem}{Theorem}
\newtheorem{proposition}{Proposition}[section]

\theoremstyle{definition}
\newtheorem{definition}[proposition]{Definition}

\newtheorem{remark}[proposition]{Remark}

\newtheorem{fact}[proposition]{Fact}

\numberwithin{equation}{section}

\author{Sarah Nakato}
\address{
Department of Mathematics\\ Kabale University\\ Plot 364
  Block 3 Kikungiri Hill\\ Kabale\\ Uganda}
\email{\href{mailto:snakato@kab.ac.ug}{snakato@kab.ac.ug}}

\author{Roswitha Rissner}
\address{Department of Mathematics\\University of Klagenfurt\\
  Universitätsstraße 65-67\\9020 Klagenfurt am Wörthersee\\Austria}
\email{\href{mailto:roswitha.rissner@aau.at}{roswitha.rissner@aau.at}}
\thanks{This research was funded in part by the Austrian Science
  Fund~(FWF)~[10.55776/DOC78]. For open access purposes, the authors
  have applied a CC~BY public copyright license to any author-accepted
  manuscript version arising from this submission.}

\title[{Irreducible integer-valued polynomials}]{Irreducible integer-valued polynomials with prescribed minimal
  power that factors non-uniquely}

\keywords{irreducible elements, absolutely irreducible elements,
  non-absolutely irreducible elements, integer-valued polynomials}

\subjclass[2010]{13A05, 11R09,  13B25, 13F20, 11C08}

\begin{document}

\maketitle

\begin{abstract}
  We study the question up to which power an irreducible
  integer-valued polynomial that is not absolutely irreducible can
  factor uniquely. For example, for integer-valued polynomials over
  principal ideal domains with square-free denominator, already the
  third power has to factor non-uniquely or the element is absolutely
  irreducible.  Recently, it has been shown that for any $N\in\N$,
  there exists a discrete valuation domain $D$ and a polynomial
  $F\in\Int(D)$ such that the minimal $k$ for which $F^k$ factors
  non-uniquely is greater than $N$.

  In this paper, we show that, over principal ideal domains with
  infinitely many maximal ideals of finite index, the minimal power
  for which an irreducible but not absolutely irreducible element has
  to factor non-uniquely depends on the $p$-adic valuations of the
  denominator and cannot be bounded by a constant.

\keywords{irreducible elements, absolutely irreducible elements,
  non-absolutely irreducible elements, integer-valued polynomials}

\textbf{MSC: }{13A05, 11R09,  13B25, 13F20, 11C08}
\end{abstract}

\vspace*{0.5cm}
\begin{center}
  \textit{Dedicated to Sophie Frisch on the occasion of her
    60\textsuperscript{th} birthday.}
\end{center}

\section{Introduction}
The building blocks in the study of factorizations of elements in
commutative rings are the irreducible elements. In rings that allow
non-unique factorizations, it is to be expected that even powers of
irreducible elements $c$ factor non-uniquely, that is, $c^k$ may have
a factorization essentially different from $c \cdots c$.

\begin{definition}
  An irreducible element is called \emph{absolutely irreducible} if
  all its powers factor uniquely.
\end{definition}

The notion of absolutely irreducible elements forms a bridge between
irreducible elements and prime elements. A thorough understanding of
the factorization behaviour of a ring entails a comprehensive study of
the (non-)absolutely irreducibles. For rings of number fields, Scott
Chapman and Ulrich
Krause~\cite[Theorem~3.1]{Chapman-Krause:2012:Atomic-decay} gave a
characterization for absolutely irreducible elements. Alfred
Geroldinger and Franz Halter-Koch gave a characterization for reduced
Krull monoids~\cite[Proposition~7.1.4
and~7.1.5]{Geroldinger-HalterKoch:2006:nuf}. In general, recognizing
absolutely irreducible polynomials appears to be a difficult task.
Absolutely irreducible elements have also been called completely
irreducible~\cite{Kaczorowski:1981:completely-irred} and strong
atoms~\cite{Baginski-Kravitz:2010:HFKR,Chapman-Krause:2012:Atomic-decay}. A
closely related notion are \emph{elementary
  atoms}~\cite[Chapter~4]{Grynkiewicz:2022:elasticites}.

In this paper, we study the factorizations of powers of irreducible
elements in rings of integer-valued polynomials, that is,
\begin{equation*}
  \Int(D) = \{F \in K[x] \mid F(D) \subseteq D\}
\end{equation*}
where $D$ is a principal ideal domain with quotient field $K$. These
rings are well-known to contain both absolutely irreducible elements
and irreducible elements that are not absolutely irreducible,
cf.~\cite{Angermueller:2022:strong-atoms,Nakato:2020:NonAbs}.  Our
focus is set on the minimal power of an irreducible element which has
more than one factorization.

Factorization-theoretic studies focused mostly on Krull monoids or, in
the context of rings, Krull domains. The rings of integer-valued
polynomials that we study here are not Krull but
Prüfer~\cite{Cahen-Chabert-Frisch:2000:interpolation,Loper:1998:prüfer}. However,
each monadic submonoid~$\llbracket F \rrbracket$---the divisor-closed
submonoid generated by all powers of $F$---of $\Int(D)$ is Krull
provided that $D$ is
Krull~\cite{Reinhart:2014:monadic,Frisch:2016:monadic}. Hence, our
study of the factorizations of powers of irreducible integer-valued
polynomials takes place in the Krull setting as well. Rings of
integer-valued polynomials have been studied for their
factorization-theoretic behavior in the recent decades,
cf.~\cite{AndersonS-Cahen-Chapman-Smith:1995:fac-iv,Antoniou-Nakato-Rissner:2018:table-crit,Cahen-Chabert:1995:Elasticity-for-IVP,Chapman-McClain:2005:irred-iv-poly,Fadinger-Frisch-Windisch:2023:lengths,Fadinger-Windisch:2023:lengths,Frisch:2013:prescribed-sets,Frisch-Nakato-Rissner:2018:fac,Peruginelli:2015:square-free-denom,Tichy-Windisch:2024:carlitz}.

In recent years there has been progress in characterizing absolutely
irreducible elements in these rings: In the joint
paper~\cite{Rissner-Windisch:2020:binom-polys} with Daniel Windisch,
the second author has verified the decade-long conjecture that the
binomial polynomials are absolutely irreducible in $\Int(\Z)$. In
subsequent collaboration with Sophie Frisch, the authors gave a
characterization of the completely split absolutely irreducible
integer-valued polynomials over a discrete valuation
domain~\cite[Theorem~2]{Frisch-Nakato-Rissner:2022:split}.

The first author gave, again in collaboration with Sophie Frisch, a
graph-theoretic characterization of absolute irreducibility of
integer-valued polynomials on principal ideal domains whose
denominators are
square-free~\cite[Theorem~3]{Frisch-Nakato:2020:graph-theoretic}.
Their proof provides a particularly neat verification mechanism for
absolute irreducibility which we recall at this point.

\begin{fact}[{\cite[Remark~3.2]{Frisch-Nakato:2020:graph-theoretic}}]\label{fact:square-free-bound}
  Let $D$ be a principal ideal domain and let $F\in \Int(D)$ be an
  irreducible polynomial with square-free denominator.

  If $F^3$ has a unique factorization in $\Int(D)$, then $F$ is
  absolutely irreducible.
\end{fact}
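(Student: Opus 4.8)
Write $F = g/b$ with $g \in D[x]$ primitive and $b = p_1 \cdots p_n$, the $p_l$ pairwise non-associate primes (square-freeness of $b$ is the hypothesis). The plan is to recast the factorizations of all powers $F^k$ as the rational points of a single polytope and then to bound the denominators of the points lying off the diagonal. As a first step I would extract two consequences of irreducibility. If some non-unit $f \in D[x]$ satisfied $f^2 \mid g$, then $g$ and $g/f$ would have the same radical, so for each $l$ one has $p_l \mid (g/f)(a)$ for all $a \in D$ exactly when $p_l \mid g(a)$ for all $a$; hence $(g/f)/b \in \Int(D)$ and $F = f\cdot\frac{g/f}{b}$ would be a proper factorization. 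So $g = f_1 \cdots f_m$ with pairwise non-associate irreducibles $f_i \in D[x]$. Likewise, if $\min_{a \in D} v_{p_l}(g(a)) \ge 2$ for some $l$, then $\frac{g}{p_l b} \in \Int(D)$ and $F = p_l\cdot\frac{g}{p_l b}$ is proper; hence $\min_a v_{p_l}(g(a)) = 1$ for every $l$.

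Next I would describe the divisors of $F^k$ in $\Int(D)$. If $G = h_G/c_G$ is such a divisor, written with primitive numerator, and $H = F^k/G = h_H/c_H$, then from $g^k c_G c_H = b^k h_G h_H$ and a comparison of contents one gets $c_G \mid b^k$, so up to units $G = \frac{\prod_i f_i^{d_i}}{\prod_l p_l^{c_l}}$ with $0 \le d_i, c_l \le k$. For $d = (d_1, \dots, d_m)$ put $\sigma_l(d) = \min_{a \in D} \sum_i d_i v_{p_l}(f_i(a))$. Then $G \in \Int(D)$ amounts to $c_l \le \sigma_l(d)$ and $F^k/G \in \Int(D)$ to $k - c_l \le \sigma_l(k\mathbf{1} - d)$, where $\mathbf{1} = (1, \dots, 1)$. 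Each $\sigma_l$ is superadditive and positively homogeneous of degree $1$, with $\sigma_l(\mathbf{1}) = \min_a v_{p_l}(g(a)) = 1$ by the first step; hence $\sigma_l(d) + \sigma_l(k\mathbf{1} - d) \le k$, so such a $G$ exists precisely when $\sigma_l(d) + \sigma_l(k\mathbf{1} - d) = k$ for all $l$, and then $c_l = \sigma_l(d)$ is forced. As $G$ is an associate of a power of $F$ exactly when $d$ is a scalar multiple of $\mathbf{1}$, and a non-unit divisor $G$ of $F^k$ with $d$ not a scalar multiple of $\mathbf{1}$ refines, together with $F^k/G$, to a factorization of $F^k$ unlike $F \cdots F$, we conclude: $F$ is absolutely irreducible if and only if, for every $k$, the only $d \in \{0, \dots, k\}^m$ with $\sigma_l(d) + \sigma_l(k\mathbf{1} - d) = k$ for all $l$ are the scalar multiples of $\mathbf{1}$.

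This relation is homogeneous of degree $1$, so on $\bar d = d/k \in [0, 1]^m$ it becomes $\sigma_l(\bar d) + \sigma_l(\mathbf{1} - \bar d) = 1$, independently of $k$. Let $\mathcal{D} \subseteq [0,1]^m$ be the set of all its solutions; since each $\sigma_l$ is a minimum of finitely many linear functionals (Dickson's lemma applied to the Pareto-minimal vectors of $\{(v_{p_l}(f_i(a)))_i : a \in D\}$), $\mathcal{D}$ is a rational polytope, symmetric under $\bar d \mapsto \mathbf{1} - \bar d$, containing the diagonal segment $\{t\mathbf{1} : 0 \le t \le 1\}$. By the previous paragraph $F$ is absolutely irreducible if and only if $\mathcal{D}$ equals that segment, and the least $k$ for which $F^k$ factors non-uniquely equals the smallest denominator occurring among the off-diagonal points of $\mathcal{D}$. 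To finish, I would analyse the facets of $\mathcal{D}$ prime by prime. With $\mathrm{Gen}(l) = \{i : \exists a \in D,\ v_{p_l}(f_i(a)) = 1 \text{ and } v_{p_l}(f_{i'}(a)) = 0 \text{ for all } i' \ne i\}$, which is nonempty because $\sigma_l(\mathbf{1}) = 1$, one checks that $\mathcal{D}$ forces $\bar d_i = \bar d_j$ whenever $i, j \in \mathrm{Gen}(l)$, and that, when $\mathrm{Gen}(l) = \{i_0\}$, every Pareto-minimal vector $w$ of total weight $\ge 2$ in $\{(v_{p_l}(f_i(a)))_i : a \in D\}$ (such $w$ have $w_{i_0} = 0$) contributes the constraints $\bar d_{i_0} \le \langle \bar d, w \rangle \le \big(\sum_i w_i\big) - 1 + \bar d_{i_0}$. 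Using also that $\mathcal{D}$ has no off-diagonal point in $\{0,1\}^m$ --- which is exactly the irreducibility of $F = F^1$ --- I would then verify that whenever $\mathcal{D}$ strictly contains the diagonal it already contains an off-diagonal point all of whose coordinates lie in $\{0, \tfrac12, 1\}$ or all of whose coordinates lie in $\{0, \tfrac13, \tfrac23, 1\}$, obtained as a suitable ``alternating'' labelling along a cycle in the incidence structure of the $f_i$ and the $p_l$. Thus, if $F$ is not absolutely irreducible, then $F^2$ or $F^3$ already factors non-uniquely; since non-unique factorization of $F^2$ propagates to $F^3$, unique factorization of $F^3$ forces $\mathcal{D}$ to be the diagonal segment, i.e., $F$ to be absolutely irreducible.

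The main obstacle is this last step --- producing an off-diagonal point of denominator at most $3$ from an arbitrary one. One cannot simply take a vertex of $\mathcal{D}$, since a cycle of length $\ell$ in the incidence structure produces vertices with denominators of size roughly $2^{\ell}$; the good point has to be written down explicitly and checked against all facet inequalities at once. An equivalent but more hands-on route avoids $\mathcal{D}$ entirely: take a non-unique factorization $F^N = A_1 \cdots A_r$ into irreducibles with $N$ minimal and, if $N \ge 4$, contract it --- using the divisibility relations among the $A_i$ and $F$ --- to a non-unique factorization of a lower power $F^{N'}$, contradicting minimality. Carrying out this contraction is essentially the same difficulty.
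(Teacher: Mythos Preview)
This paper does not prove Fact~\ref{fact:square-free-bound}; it is quoted from \cite[Remark~3.2]{Frisch-Nakato:2020:graph-theoretic} as background, so there is no in-paper argument to compare yours against. Judged on its own, your proposal is a programme rather than a proof: the decisive step---producing, from an arbitrary off-diagonal rational point of your polytope $\mathcal{D}$, one whose coordinates all lie in $\{0,\tfrac12,1\}$ or in $\{0,\tfrac13,\tfrac23,1\}$---is explicitly left open, and you yourself flag it as ``the main obstacle''. Everything preceding it (square-freeness of the numerator $g$, the normalisation $\sigma_l(\mathbf 1)=1$ from image-primitivity, the description of divisors of $F^k$ via the superadditive functions $\sigma_l$, and the homogeneous polytope reformulation) is correct and is a clean restatement of the problem, but none of it bounds the minimal bad exponent; as you note, vertices of $\mathcal D$ can have denominators exponential in the cycle length, so the bound~$3$ does not fall out of general polytope geometry.

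For orientation, the argument in the cited source is precisely the graph-theoretic one you gesture at in your last paragraph: one works directly with the incidence structure between the irreducible factors $f_i$ and the primes $p_l$ and extracts the bound~$3$ from an explicit analysis of that graph. Your polytope framework would have to reproduce that analysis internally, so the step you have deferred is not a technicality but the substance of the result.
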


Following up on the last two references, Moritz Hiebler and the
authors gave a complete characterization of the absolutely irreducible
integer-valued polynomials over a discrete valuation
domain~\cite[Theorem~2]{Hiebler-Nakato-Rissner:2023:abs-irred}.
Additionally, they explicitly established upper bounds for the minimal
power $S$ that has to factor non-uniquely for irreducible elements
that are not absolutely irreducible. One of these bounds only depends
on the size of the residue field of the base ring and the valuation of
the denominator. This can be considered as a first step towards a
generalization of the square-free case
in~Fact~\ref{fact:square-free-bound}. Moreover, they have shown that
$S$ cannot be bounded by a constant. Indeed, for any $N$ there exists
a discrete valuation domain and a polynomial $F$ such that $F^n$
factors non-uniquely for the first time for an $n\ge N$.

In the publication at hand, we show in Theorem~\ref{theorem:main} that
in every principal ideal domain with infinitely many maximal ideals of
finite index there is no constant bound, too. The minimal power of an
irreducible integer-valued polynomial which factors non-uniquely
depends, hence, on the valuation of the denominator.  Indeed, we show
that for all $N\in \N$ there exists an irreducible polynomial
$F\in \Int(D)$ whose powers $F^n$ factor uniquely when $n < N$, but
$F^N$ does not. Furthermore, we show that $F^N$ has exactly two
essentially different factorizations, one of length $2$ and one of
length $N$.

The proof of this theorem is located in
Section~\ref{section:main-result}. Before, in
Section~\ref{section:preliminaries}, we collect the required
preliminaries.

\section{Preliminaries}
\label{section:preliminaries}
Throughout, let $D$ be a principal ideal domain. For a prime element
$p$ (prime ideal $P$), we denote the $p$-adic ($P$-adic) valuation by
$\val_p$ ($\val_P$).  As usual, for an element $r$ of a commutative
ring, a \emph{factorization} is an expression of $r$ as the product
$c_1\cdots c_k$ of irreducible elements. The number $k$ of irreducible
factors is called the \emph{length} of this factorization. Two
factorizations of an element are \emph{essentially the same} if the factors are
equal up to permutation and multiplication by units.

For a polynomial $F\in \Int(D)$, the \emph{fixed divisor} is
defined as the ideal
\begin{equation*}
  \fixdiv(F)=  (F(a) \mid a\in D).
\end{equation*}
If $\fixdiv(F) = (d)$ we call, by abuse of notation, $d$ the fixed
divisor of $F$.  We call a polynomial \emph{image-primitive} if its
fixed divisor is equal~$1$. Note that an irreducible integer-valued
polynomial is necessarily image-primitive.

For $f\in D[x]$ and $0\neq b\in D$, the polynomial $\frac{f}{b}$ is
an element of $\Int(D)$ if and only if $b \mid \fixdiv(f)$. Moreover,
for any prime element $p\in D$, the following implication holds:
\begin{equation}\label{eq:primes-with-large-index}
  p\mid \fixdiv(f) \quad \Longrightarrow\quad f\in pD[x] \quad\text{ or }\quad \norm{pD} = |D/pD| \le \deg(f).
\end{equation}

For a thorough introduction into factorizations and integer-valued
polynomials we refer to \cite{Geroldinger-HalterKoch:2006:nuf}
and~\cite{Cahen-Chabert:1997:IVP,Cahen-Chabert:2016:survey}, respectively.

We close this section with two results that we need for the
construction below in the proof of Theorem~\ref{theorem:main}.

\begin{fact}[{\cite[Lemma~3.3]{Frisch-Nakato-Rissner:2018:fac},
    \cite[Lemma~6.2]{Hiebler-Nakato-Rissner:2023:abs-irred}}]
  \label{fact:irreducible-replacements}
  Let $D$ be a Dedekind domain with infinitely many maximal ideals and
  $K$ its quotient field. Further, let $g_1$, \dots, $g_q\in D[x]$ be
  monic, non-constant polynomials, and set
  $d = \sum_{i=1}^q \deg(g_i)$.

  For every $m\in \N_0$, there exist monic polynomials $f_1$, \dots,
  $f_q$ which satisfy
  the following properties:
  \begin{enumerate}
  \item $\deg(f_i) = \deg(g_i)$,
  \item $f_1$, \dots, $f_q$ are pairwise non-associated irreducible
    polynomials in $K[x]$, and
  \item $f_i \equiv g_i \mod P^{m+1}D[x]$ for all prime ideals $P$ of
    $D$ with $\norm{P} \le d$.
  \end{enumerate}
  In particular, if $P$ is a prime ideal of $D$ with $\norm{P} \le d$
  and $a\in D$ then
  \begin{equation*}
    \val_P(g_i(a)) \le m \text{ or } \val_P(f_i(a)) \le m \quad \Longrightarrow \quad \val_P\!\left(f_i(a) \right) = \val_P\!\left(g_i(a) \right)
  \end{equation*}
\end{fact}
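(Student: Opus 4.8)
The plan is a controlled perturbation of the $g_i$: conditions~(i) and~(iii) pin each $f_i$ down modulo a large nonzero ideal, so the only room left for manufacturing irreducibility sits at primes of large norm, and this is exactly what the hypothesis on $D$ supplies. First I would record that the set $S = \{P \mid \norm{P} \le d\}$ of prime ideals occurring in~(iii) is finite; write $S = \{P_1, \dots, P_s\}$, put $\mathfrak a = \prod_{j=1}^s P_j^{m+1}$ (a nonzero ideal), and, using that $D$ has infinitely many maximal ideals, fix a maximal ideal $Q \notin S$ together with a uniformizer $\pi \in Q \setminus Q^2$. I would also note that two monic polynomials of $K[x]$ are associated exactly when they are equal (compare leading coefficients), so that proving~(ii) amounts to making the $f_i$ irreducible in $K[x]$ and pairwise distinct.

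For the construction, fix $i$, set $n_i = \deg g_i \ge 1$, and write $g_i = x^{n_i} + \sum_{k=0}^{n_i-1} b_{i,k} x^k$. Since $Q \notin S$, the ideals $P_1^{m+1}, \dots, P_s^{m+1}, Q^2$ are pairwise comaximal, and I would invoke the Chinese Remainder Theorem to choose, for each $k$, an element $a_{i,k} \in D$ with $a_{i,k} \equiv b_{i,k} \mod P_j^{m+1}$ for all $j$, with $a_{i,k} \in Q$ for $1 \le k \le n_i-1$, and with $a_{i,0} \equiv \pi \mod Q^2$; then put $f_i = x^{n_i} + \sum_{k=0}^{n_i-1} a_{i,k} x^k$. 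Now $\deg f_i = \deg g_i$ gives~(i); the congruences modulo $P_j^{m+1}$ give~(iii), because $S$ comprises every prime $P$ with $\norm{P} \le d$; and $f_i$ is Eisenstein at $Q$, hence irreducible in $K[x]$ --- the passage from $D[x]$ to $K[x]$ using that a Dedekind domain is integrally closed (so that a factorization over $K$ descends to monic factors over $D$).

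To secure the pairwise distinctness required in~(ii), I would carry out this construction inductively on $i$: at step~$i$ there are infinitely many monic polynomials of $D[x]$ of degree $n_i$ meeting all the imposed congruences --- adding to one of the coefficients an arbitrary element of the nonzero ideal $\mathfrak a Q^2$ disturbs none of them, and $D$ is infinite --- so $f_i$ can be picked distinct from the finitely many $f_1, \dots, f_{i-1}$ already constructed. For the ``in particular'' clause, evaluating the congruence in~(iii) at $a \in D$ gives $\val_P(f_i(a) - g_i(a)) \ge m+1$; if $\val_P(g_i(a)) \le m$ then $\val_P(g_i(a)) < \val_P(f_i(a) - g_i(a))$, so $\val_P(f_i(a)) = \val_P(g_i(a))$ by the ultrametric inequality, and the case $\val_P(f_i(a)) \le m$ is symmetric.

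The step I expect to be the real obstacle is reconciling~(ii) with~(iii): condition~(iii) forces $f_i$ to agree with the possibly reducible polynomial $g_i$ modulo the large ideal $\mathfrak a$, so any irreducibility must be engineered ``away from $S$'', and it is precisely the existence of a prime $Q \notin S$ --- guaranteed by combining the infinitude of the maximal spectrum of $D$ with the finiteness of $S$ --- that makes the Eisenstein construction possible. A subsidiary point to pin down is the finiteness of $S$ itself, since that is what ensures~(iii) can be satisfied by a genuine nonzero perturbation rather than collapsing to $f_i = g_i$.
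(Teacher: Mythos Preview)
The paper does not give its own proof of this statement; it is recorded as Fact~\ref{fact:irreducible-replacements} with a citation to \cite{Frisch-Nakato-Rissner:2018:fac} and \cite{Hiebler-Nakato-Rissner:2023:abs-irred}. Your reconstruction is correct and follows the natural (and, as far as the cited sources go, standard) line: freeze the residues of each $g_i$ at the finitely many small primes via CRT, and use one auxiliary prime $Q\notin S$ to make every $f_i$ Eisenstein, with the remaining freedom in $\mathfrak a Q^2$ supplying pairwise distinctness; the ``in particular'' clause is exactly the ultrametric argument you give.

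The only point you leave open is the finiteness of $S=\{P\mid \norm{P}\le d\}$, which you rightly flag. It is routine in a Dedekind domain: if $\norm{P}=n\le d$ then $b^{n}-b\in P$ for every $b\in D$, so any $P\in S$ divides the fixed element $\prod_{2\le n\le d}(b^{n}-b)$; since $D$ is an infinite domain (it has infinitely many maximal ideals, hence is not a field, hence not finite) one can choose $b$ making this product nonzero, and a nonzero element of a Dedekind domain lies in only finitely many primes. With this in hand your argument is complete.
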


\begin{fact}[{Special case
    of~\cite[Lemma~2.5]{Frisch-Nakato:2020:graph-theoretic}}]
  \label{fact:quintessential}
  Let $D$ be a principal ideal domain, $p\in D$ a prime element, and
  $f_1$, \dots, $f_q\in D[x]$ be monic, non-constant, irreducible
  polynomials such that
  \begin{enumerate}
  \item $\fixdiv\!\left(\prod_{i=1}^qf_i\right) = p^e$ and
  \item for $1 \leq j< q$, there exists $w_j \in D$ with
    \begin{equation*}
      \val_{p}(f_i(w_j)) =
      \begin{cases}
        e & 1 \le i = j < q, \\
        0 & i\neq j.
      \end{cases}
    \end{equation*}
  \end{enumerate}

  Then for each $n\in \N$, every factorization of
  $\left(\frac{\prod^k_{i=1}f_i}{p^e}\right)^{\!n}$ in $\Int(D)$ into
  not necessarily irreducible factors is essentially of the form
  \begin{equation*}
    \frac{\left(\prod^{q-1}_{i=1}f_i\right)^{\!a_1}f_q^{b_1}}{p^{ea_1}} \cdot
    \frac{\left(\prod^{q-1}_{i=1}f_i\right)^{\!a_2}f_q^{b_2}}{p^{ea_2}}
  \end{equation*}
  with $a_1 + a_2 = b_1+b_2 = n$.
\end{fact}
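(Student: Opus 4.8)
The plan is to analyse an arbitrary factorization of $F^n$, where $F := \frac{\prod_{i=1}^{q} f_i}{p^e} \in \Int(D)$ by condition~(i). So suppose $F^n = G_1 \cdots G_s$ with all $G_\ell$ non-units of $\Int(D)$. Since $K[x]$ is a unique factorization domain in which $f_1, \dots, f_q$ are, up to units of $D$, the only prime divisors of $\prod_i f_i^{\,n}$, each $G_\ell$ must have the form $\gamma_\ell \prod_{i=1}^{q} f_i^{\,n_{i,\ell}}$ with $\gamma_\ell \in K^\times$, where $\sum_\ell n_{i,\ell} = n$ for every $i$ and $\prod_\ell \gamma_\ell = p^{-en}$. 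Here I would first invoke condition~(i): the $p$-valuation of $\prod_i f_i$ attains $e$ at some argument and every other prime misses some value, so $\fixdiv\bigl((\prod_{i=1}^{q} f_i)^{M}\bigr) = p^{eM}$ for all $M$; since $\prod_i f_i^{\,n_{i,\ell}}$ divides such a power in $D[x]$, the ideal $\fixdiv\bigl(\prod_i f_i^{\,n_{i,\ell}}\bigr)$ is a pure power of $p$, and the criterion $\frac{g}{b}\in\Int(D)\iff b\mid\fixdiv(g)$ then forces the denominator of $\gamma_\ell$ to be a power of $p$.

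The heart of the argument is to evaluate the factorization at the points $w_1, \dots, w_{q-1}$ from condition~(ii). Since $\val_p(f_i(w_j)) = e$ for $i = j$ and $\val_p(f_i(w_j)) = 0$ for $i \neq j$, we have $\val_p\bigl(G_\ell(w_j)\bigr) = \val_p(\gamma_\ell) + e\, n_{j,\ell}$, which is nonnegative because $G_\ell \in \Int(D)$. Summing over $\ell$ gives $\sum_\ell \bigl(\val_p(\gamma_\ell) + e\, n_{j,\ell}\bigr) = \val_p(p^{-en}) + en = 0$, so each summand vanishes and $\val_p(\gamma_\ell) = -e\, n_{j,\ell}$ for every $\ell$ and every $j \in \{1, \dots, q-1\}$. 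In particular $n_{1,\ell} = \cdots = n_{q-1,\ell}$; writing $a_\ell$ for this common value and $b_\ell := n_{q,\ell}$, we get $\val_p(\gamma_\ell) = -e a_\ell$. Combined with the first paragraph (the denominator of $\gamma_\ell$ is a power of $p$) and with $\prod_\ell \gamma_\ell = p^{-en} = p^{-e\sum_\ell a_\ell}$, this forces $\gamma_\ell = \varepsilon_\ell\, p^{-e a_\ell}$ with $\varepsilon_\ell \in D^\times$. Hence
\[
  G_\ell \;=\; \varepsilon_\ell\,\frac{\bigl(\prod_{i=1}^{q-1} f_i\bigr)^{a_\ell}\, f_q^{\,b_\ell}}{p^{e a_\ell}}, \qquad \sum_\ell a_\ell \;=\; \sum_\ell b_\ell \;=\; n
\]
and $(a_\ell, b_\ell) \neq (0,0)$ for each $\ell$ because $G_\ell$ is a non-unit and all $f_i$ are non-constant.

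This describes every factor, and a product of factors of this shape is again of this shape. Partitioning $G_1, \dots, G_s$ into two non-empty groups and multiplying out each group therefore writes $F^n$, up to units, as $\frac{(\prod_{i=1}^{q-1} f_i)^{a_1} f_q^{b_1}}{p^{e a_1}} \cdot \frac{(\prod_{i=1}^{q-1} f_i)^{a_2} f_q^{b_2}}{p^{e a_2}}$ with $a_1 + a_2 = b_1 + b_2 = n$ --- the asserted form (so that a factorization of any length is, after collecting factors, of the displayed two-factor shape). The step I expect to require the most care is the first one: squeezing out of condition~(i) that each factor's denominator is a pure power of $p$, and then, together with the valuation identities of the second paragraph, that the leading constants $\varepsilon_\ell$ are units. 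The evaluation trick at $w_1, \dots, w_{q-1}$ --- the real engine of the argument --- then follows almost formally.
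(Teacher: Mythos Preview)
The paper does not supply its own proof of this statement; it is recorded as a Fact and attributed to \cite[Lemma~2.5]{Frisch-Nakato:2020:graph-theoretic}. Your argument is correct and follows what is essentially the natural route: pass to the UFD $K[x]$ to write each factor as $\gamma_\ell\prod_i f_i^{\,n_{i,\ell}}$, use condition~(i) (via $\fixdiv\bigl(\prod_i f_i^{\,n_{i,\ell}}\bigr)\mid\fixdiv\bigl((\prod_i f_i)^n\bigr)=p^{en}$) to see that only the prime $p$ can occur in the denominator of $\gamma_\ell$, and then evaluate at the witnesses $w_j$ from condition~(ii) to force $n_{1,\ell}=\cdots=n_{q-1,\ell}$ and $\val_p(\gamma_\ell)=-e\,n_{1,\ell}$; the product identity $\prod_\ell\gamma_\ell=p^{-en}$ then makes the remaining constants units of $D$. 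One cosmetic point: your final step partitions the $G_\ell$ into two \emph{non-empty} groups, which presupposes $s\ge 2$; since the displayed two-term form in the statement allows $(a_2,b_2)=(0,0)$, the case $s=1$ is covered trivially and needs no separate argument.
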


\begin{remark}
  In the language of \cite{Frisch-Nakato:2020:graph-theoretic}, the
  existence of the $w_j$ in Fact~\ref{fact:quintessential} is referred
  to as $f_j$ being \emph{quintessential} for $p$
  among the $f_1$, \dots, $f_q$.
\end{remark}

\section{Main result}
\label{section:main-result}
\begin{theorem} \label{theorem:main} Let $D$ be a principal ideal
  domain with infinitely many prime ideals of finite index.  For each
  integer $N\ge 2$ there exists an irreducible element $F\in \Int(D)$
  such that
  \begin{enumerate}[label=(\alph*)]
  \item\label{theorem:main:1} $F^n$ factors uniquely for all $n< N$ and
  \item\label{theorem:main:2} $F^N$ has exactly two essentially different
    factorizations, one of length $2$ and one of length $N$.
  \end{enumerate}
\end{theorem}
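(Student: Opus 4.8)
The plan is to construct $F$ explicitly as $F = \frac{f_1 \cdots f_N}{p^e}$ for a suitable prime $p$ of $D$, suitable exponent $e$, and suitable monic irreducible polynomials $f_1, \dots, f_N \in D[x]$, so that Fact~\ref{fact:quintessential} applies and pins down every factorization of every power $F^n$. Concretely, I would first choose, for a given prime $p$ of finite index $q_0 = \norm{pD}$, monic polynomials $g_1, \dots, g_N \in D[x]$ with a prescribed common behaviour modulo a high power of $p$: I want $\fixdiv(g_1 \cdots g_N) = p^e$ for an $e$ that I get to control, and I want witnesses $w_1, \dots, w_{N-1} \in D$ realizing the valuation pattern in condition~(ii) of Fact~\ref{fact:quintessential}. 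A natural template is to build the $g_i$ from products of linear factors $(x-a)$ over a complete residue system modulo $p$ (and its powers), arranged so that $g_j$ alone is responsible for the entire fixed divisor at the point $w_j$ while the other $g_i$ are $p$-units there; the degrees must be pushed up enough that $\norm{pD} \le \deg(\prod g_i)$, which is exactly the obstruction flagged in~\eqref{eq:primes-with-large-index} and is harmless since we are free to inflate degrees.

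The second step is to pass from the $g_i$ to genuinely irreducible polynomials. Since $D$ is a principal ideal domain with infinitely many prime ideals of finite index (in particular a Dedekind domain with infinitely many maximal ideals), I invoke Fact~\ref{fact:irreducible-replacements} with $m$ chosen larger than $e$ and larger than every relevant valuation appearing in the $g_i$-data: this yields monic, pairwise non-associated, $K$-irreducible $f_1, \dots, f_N$ with $f_i \equiv g_i \bmod P^{m+1}$ for all primes $P$ with $\norm{P} \le \deg(\prod g_i)$. The ``In particular'' clause of that Fact guarantees $\val_P(f_i(a)) = \val_P(g_i(a))$ whenever one of the two is $\le m$, at every prime $P$ of norm at most $\deg(\prod g_i)$ — which includes $p$ and every prime that could possibly divide a fixed divisor of a polynomial of this degree. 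Hence $\fixdiv(f_1 \cdots f_N) = p^e$ is inherited from the $g_i$, the witness conditions~(ii) are inherited at the $w_j$, and $F = \frac{f_1 \cdots f_N}{p^e} \in \Int(D)$. Irreducibility of $F$ in $\Int(D)$ follows because the $f_i$ are irreducible in $K[x]$ and $F$ is image-primitive, so $F$ is an atom of $\Int(D)$.

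Now Fact~\ref{fact:quintessential} applies verbatim with $q = N$: every factorization of $F^n$ into (not necessarily irreducible) factors is essentially
\[
  \frac{\left(\prod_{i=1}^{N-1} f_i\right)^{a_1} f_N^{b_1}}{p^{e a_1}} \cdot \frac{\left(\prod_{i=1}^{N-1} f_i\right)^{a_2} f_N^{b_2}}{p^{e a_2}}, \qquad a_1 + a_2 = b_1 + b_2 = n.
\]
For part~\ref{theorem:main:1}, I must show that for $n < N$ this data forces the trivial factorization $F^n = F \cdots F$. The key point is a counting/divisibility argument on the exponents: the first factor above lies in $\Int(D)$ only if $p^{e a_1}$ divides the fixed divisor of $\left(\prod_{i=1}^{N-1} f_i\right)^{a_1} f_N^{b_1}$, and one checks — using the explicit $p$-adic behaviour of the $f_i$ built in step one, evaluated over residues modulo suitable powers of $p$ — that this integrality can only hold when $b_1 = a_1$ (and symmetrically $b_2 = a_2$), unless $n \ge N$; I would arrange the $g_i$ (hence $f_i$) precisely so that a ``mismatch'' $b_1 \ne a_1$ needs at least $N$ copies of the product to absorb enough powers of $p$. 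For $n = N$, the same analysis shows the only non-trivial possibility is $a_1 = 0$, $b_1 = N$ (equivalently the symmetric one), giving the second factorization
\[
  F^N = \frac{f_N^{\,N}}{1} \cdot \frac{\left(\prod_{i=1}^{N-1} f_i\right)^{N}}{p^{eN}},
\]
and I would verify both displayed factors are actually irreducible in $\Int(D)$ (again via image-primitivity plus $K[x]$-irreducibility of the constituent $f_i$, noting $\frac{f_N^N}{1}=f_N^{\,N}$ is \emph{not} irreducible — so in fact the length-$2$ factorization is $f_N \cdot \frac{\left(\prod_{i=1}^{N-1} f_i\right)^{N}}{p^{eN}}$ after re-expanding, and one must recount lengths carefully here). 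The main obstacle is exactly this last bookkeeping: choosing the fixed-divisor exponent $e$ and the local structure of the $g_i$ so that the Diophantine constraint on $(a_1,b_1)$ coming from integrality of the sub-factors has no solution with $1 \le n \le N-1$ beyond the trivial one, yet acquires exactly one new solution at $n = N$ of the prescribed shape and length.
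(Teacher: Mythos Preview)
Your overall architecture matches the paper's: build template polynomials $g_i$, replace them by $K[x]$-irreducibles via Fact~\ref{fact:irreducible-replacements}, and use Fact~\ref{fact:quintessential} to enumerate all factorizations of $F^n$. But the heart of the proof---the actual choice of the $g_i$ and of $e$, which you yourself flag as ``the main obstacle''---is left unspecified, and where you do commit to specifics you go astray. The paper does \emph{not} take $N$ polynomials: it fixes a prime $p$ with $q=\norm{pD}$, takes a complete residue system $a_1,\dots,a_q$ modulo $p$, and sets $g_i=(x-a_i)^{N-1}$ for $i<q$ and $g_q=(x-a_q)^{N}$, so that $e=N-1$. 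The single asymmetry in degree between $g_q$ and the others is the whole mechanism: evaluating the first factor in~\eqref{eq:factorizations} at $c_q$ with $\val_p(c_q-a_q)=1$ gives $b_1 N \ge a_1(N-1)$, i.e.\ $(a_1-b_1)N\le a_1\le n$, which forces $a_1=b_1$ when $n<N$ and allows only $a_1-b_1\in\{0,1\}$ when $n=N$.

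Two concrete errors. First, your argument for irreducibility of $F$ is invalid: image-primitivity together with $K[x]$-irreducibility of the $f_i$ does \emph{not} force irreducibility in $\Int(D)$ (for instance $\tfrac{x^2(x-1)^2}{4}\in\Int(\Z)$ is image-primitive with irreducible numerator factors, yet splits as $\bigl(\tfrac{x(x-1)}{2}\bigr)^2$); the paper instead deduces irreducibility from the $n=1$ case of the factorization analysis. Second, your proposed length-$2$ factorization $f_N\cdot\frac{(\prod_{i<N}f_i)^{N}}{p^{eN}}$ does not even multiply out to $F^N$: it is missing $f_N^{\,N-1}$. In the paper the non-trivial solution at $n=N$ is $a_1=N$, $b_1=N-1$, $a_2=0$, $b_2=1$, yielding
\[
F^N=\frac{\bigl(\prod_{i<q}f_i\bigr)^{N} f_q^{\,N-1}}{p^{N(N-1)}}\cdot f_q,
\]
and one still must verify that the large factor is integer-valued (via the inequalities~\eqref{eq:val>=}) and irreducible.
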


\begin{proof}
  \textbf{Step 1:} We construct a suitable polynomial $F$.

  Let $P = (p)$ be a prime ideal of $D$ and $q=\norm{P}$.  Further,
  let $a_1$, $a_2$, \dots, $a_{q}$ be a complete system of residues
  modulo $p$ which satisfies $a_i \equiv 0 \mod Q$ for all prime
  ideals $Q\neq P$ with $\norm{Q} \le q$.  We define
  \begin{equation*}
    g_i =
    \begin{cases}
      (x-a_i)^{N-1} & 1 \leq i < q, \\
      (x-a_q)^{N}   & i = q.
    \end{cases}
  \end{equation*}
  A straight-forward verification shows that
  \begin{equation}\label{eq:val>=}
    \val_P(g_i(c)) \ge
    \begin{cases}
      N-1 & \val_P(c- a_i) \ge 1 \text{ and } 1\le i < q,\\
      N   & \val_P(c- a_q) \ge 1 \text{ and } i=q \\
    \end{cases}
  \end{equation}
  and
  \begin{equation}\label{eq:val=}
    \val_P(g_i(c)) =
    \begin{cases}
      N-1 & \val_P(c- a_i) = 1 \text{ and } 1\le i < q,\\
      N   & \val_P(c- a_q) = 1 \text{ and } i=q, \\
      0   & \val_P(c- a_i) = 0 \text{ and } 1\le i \le q\\
    \end{cases}
  \end{equation}
  holds. Moreover, due to the choice of the $a_i$ and the property
  in~\eqref{eq:primes-with-large-index}, for all prime ideals
  $Q \neq P$ of $D$, there exists an element $z$ of $D$ such that
  \begin{equation}\label{eq:otherprimes}
    \val_Q\!\left(\prod^q_{i=1}g_i(z)\right) = 0.
  \end{equation}
  It follows that
  \begin{equation}
    \label{eq:y}
    \fixdiv\!\left(\prod^q_{i=1}g_i\right) = p^{N-1}.
  \end{equation}

  We apply Fact~\ref{fact:irreducible-replacements} for $m=N$: Let
  $f_1$, $f_2$, \dots, $f_q$ be the irreducible polynomials.
  Fact~\ref{fact:irreducible-replacements} ensures that
  $\val_Q(f_i(a)) = \val_Q(g_i(a))$ holds for all $a\in D$ and all
  prime ideals $Q$ with $\norm{Q}\le \sum_{i=1}^q \deg(f_i)$ with
  $\val_Q(g_i(a)) \le N$ or $\val_Q(f_i(a)) \le N$.  In particular,
  $f_1$, \dots, $f_q$ satisfy the same (in)equalities as the $g_i$,
  that is, \eqref{eq:val>=},~\eqref{eq:val=},
  and~\eqref{eq:otherprimes} hold when we replace the $g_i$ by the
  $f_i$ (where for prime ideals $Q$ that are not covered by
  Fact~\ref{fact:irreducible-replacements} the latter assertion holds
  due to~\eqref{eq:primes-with-large-index}).  Hence
  \begin{equation*}
    \fixdiv\!\left(\prod^q_{i=1}f_i\right) =\fixdiv\!\left(\prod^q_{i=1}g_i\right) = p^{N-1}.
  \end{equation*}
  We set
  \begin{equation*}
    F = \frac{\prod^q_{i=1}f_i}{p^{N-1}}
  \end{equation*}
  and note that $F$ is an image-primitive element of $\Int(D)$.

  \textbf{Step 2:} We verify that $F$ is irreducible and satisfies the
  assertions~\textit{\ref{theorem:main:1}}
  and~\textit{\ref{theorem:main:2}}.  For each $1 \leq j \le q$, let
  $c_j \in D$ be elements with $\val_P(c_j - a_j) = 1$.  It follows
  from Fact~\ref{fact:quintessential} that, for $n>1$, every
  factorization of $F^n$ (into not necessarily irreducible elements)
  is essentially of the form
  \begin{equation}\label{eq:factorizations}
    F^n = \frac{\left(\prod^{q-1}_{i=1}f_i\right)^{\!a_1}f_q^{b_1}}{p^{a_1(N-1)}} \cdot
    \frac{\left(\prod^{q-1}_{i=1}f_i\right)^{\!a_2}f_q^{b_2}}{p^{a_2(N-1)}}
  \end{equation}
  where $a_1$, $a_2$, $b_1$, $b_2\ge 0$ are integers with
  \begin{equation}
    \label{eq:ab}
    a_1 + a_2 = b_1 + b_2 = n.
  \end{equation}
  We assume without restriction that $b_1 \le a_1$.  As both factors
  in \eqref{eq:factorizations} are integer-valued, the equations
  in~\eqref{eq:val=} in combination with
  Fact~\ref{fact:irreducible-replacements} imply that
  \begin{align*}\label{eq:ivineq}
    \begin{split}
      b_1N
      &=  \val_P\!\left(g_q^{b_1}(c_q)\right) \\
      &= \val_P\!\left(\prod^{q-1}_{i=1}g_i^{a_1}(c_q)g_q^{b_1}(c_q)\right) \\
      &= \val_P\!\left(\prod^{q-1}_{i=1}f_i^{a_1}(c_q)f_q^{b_1}(c_q)\right) \\
      &\ge a_1(N-1).
    \end{split}
  \end{align*}
  This further implies that
  \begin{equation*}
    0 \le b_1N - a_1 (N-1) = (b_1-a_1)N + a_1
  \end{equation*}
  or, equivalently,
  \begin{equation}\label{eq:ineq1}
    (a_1-b_1)N \le a_1.
  \end{equation}
  The latter then yields
  \begin{equation}\label{eq:ineq2}
     (a_1-b_1)N \leq a_1 + a_2 = n.
  \end{equation}
  Since $b_1 \le a_1$ by assumption, it follows from
  Equation~\eqref{eq:ineq2} that, whenever $n<N$, then $a_1 = b_1$
  must hold. In this case, also $a_2 = b_2$ follows from
  Equation~\eqref{eq:ab}, and Equation~\eqref{eq:factorizations}
  reduces to
  \begin{equation}
    F^n
    = \left(\frac{\left(\prod^{q-1}_{i=1}f_i\right)f_q}{p^{N-1}}\right)^{\!a_1} \cdot
    \left(\frac{\left(\prod^{q-1}_{i=1}f_i\right)f_q}{p^{N-1}}\right)^{\!a_2}
    = F^{a_1} F^{a_2}.
  \end{equation}
  In summary, if $n<N$ then $F^n$ factors uniquely.  This completes
  the proof of~\textit{\ref{theorem:main:1}}.

  Note that the argument above in particular applies to the case
  $n=1$. Hence, in this case, either $a_1=b_1=0$ or $a_2=b_2=0$ and
  $F$ is irreducible.

  To prove \textit{\ref{theorem:main:2}}, we set $n=N$ in
  Equation~\eqref{eq:factorizations}. By~Equation~\eqref{eq:ineq2} it
  follows that $(a_1-b_1)N \le N$ which implies that either
  $a_1 = b_1$ or $a_1=b_1+1$. As above, the first case leads to the
  trivial factorization $F^{N} = F \cdots F$ which is of
  length~$N$.

  From now on we assume that $a_1 = b_1+1$. It follows
  from~\eqref{eq:ab} and~\eqref{eq:ineq1} that
  \begin{equation}
    N \le a_1 = b_1 + 1 \le N.
  \end{equation}
  which immediately implies that $a_1= N$, $a_2 = 0$, $b_1= N-1$ and
  $b_2 = 1$. Since the $f_i$ satisfy the analogous versions
  of~\eqref{eq:val>=} and~\eqref{eq:val=} by
  Fact~\ref{fact:irreducible-replacements}, it follows that for all
  $c\in D$,
  \begin{equation*}
    \val_P\!\left(\left(\prod^{q-1}_{i=1}f_i^{N}(c)\right)f_q^{N-1}(c)\right)
    = \sum_{i=1}^{q-1}N \val_P(f_i(c)) + (N-1) \val_P(f_q(c))
    \ge N(N-1)
  \end{equation*}
  which further implies that
  \begin{equation*}
    F^{N} = \frac{\left(\prod^{q-1}_{i=1}f_i^{N}\right)f_q^{N-1}}{p^{N(N-1)}} \cdot  f_q
  \end{equation*}
  is a factorization of $F^N$ into irreducibles of length $2$. We have
  found all essentially different factorizations of $F^N$.
\end{proof}

%
\bibliographystyle{plain}
\bibliography{bibliography}

\end{document}